\newtheorem{definition}{Definition}
\newtheorem{theorem}{Theorem}
\newtheorem{lemma}{Lemma}
\renewcommand{\text}[1]{\mbox{\rm #1}}
\newcommand{\remove}[1]{}
\begin{document}

\title{Solving A Class of Discrete Event Simulation-based Optimization Problems \\Using ``Optimality in Probability''}

\author{\authorblockN{Jianfeng Mao}
\authorblockA{School of Mechanical and Aerospace Engineering\\ Nanyang Technological University, Singapore 639798\\
Email: jfmao@ntu.edu.sg}\thanks{{\footnotesize The authors’ work is supported in part by ATMRI under Grant M4061216.057, by NTU under startup grant M58050030, by AcRF under Tier 1 grant RG 33/10 M52050117, by NSF under grants CNS-1239021, ECCS-1509084, and IIP-1430145, by AFOSR under grant FA9550-15-1-0471, and by ONR under grant N00014-09-1-1051. }}
\and
\authorblockN{Christos G. Cassandras}
\authorblockA{Division of Systems Engineering\\ Boston University, Brookline, MA 02446, USA\\
Email: cgc@bu.edu}}
\date{}
\maketitle

\begin{abstract}
We approach a class of discrete event simulation-based optimization problems using \emph{optimality in probability}, an approach which yields what is termed a ``champion solution''. Compared to the traditional \emph{optimality in expectation}, this approach favors the solution whose actual performance is more likely better than that of any other solution; this is an effective alternative to the traditional optimality sense, especially when facing a dynamic and nonstationary environment. Moreover, using \emph{optimality in probability} is computationally promising for a class of discrete event simulation-based optimization problems, since it can reduce computational complexity by orders of magnitude compared to general simulation-based optimization methods using \emph{optimality in expectation}. Accordingly, we have developed an “Omega Median Algorithm” in order to effectively obtain the champion solution and to fully utilize the efficiency of well-developed off-line algorithms to further facilitate timely decision making. An inventory control problem with nonstationary demand is included to illustrate and interpret the use of the Omega Median Algorithm, whose performance is tested using simulations.

\textit{Keywords}: Simulation-based Optimization, Optimality in Probability, Nonstationary Inventory Control.

\end{abstract}

\section{Introduction}

A general stochastic optimization problem using optimality in expectation can be formulated as
\begin{equation}\label{SOP}
\min\limits_{u\in \Phi} E [J(u,\omega)]
\end{equation}
where $u$ is the decision variable, $\Phi$ is the feasible space of $u$, and
$\omega$ is used to index sample paths resulting from different realizations
of a collection of random variables that affect the performance $J(u,\omega)$.
In the context of discrete event systems, we commonly face a dynamic
stochastic process, in which $u$ is an event-triggered online control action
and $J(u,\omega)$ is the actual performance of $u$ over a certain sample path
$\omega$. For example, in the on-line inventory control problem later
considered in Section \ref{OICP}, $u$ is the order quantity decided at the
beginning of each period, $\omega$ is a sample path constructed by a sequence
of demands, and $J(u,\omega)$ is the corresponding operating cost, including
setup cost, holding cost and shortage cost.

Since it is typically impossible to derive the closed form of
$E\big\{J(u,\omega)\big\}$ in (\ref{SOP}), simulation-based optimization
methods need to be employed to obtain a near-optimal solution. In what
follows, we define an \textquotedblleft evaluation\textquotedblright\ as an
operation of calculating the value of $J(u,\omega)$ for a specific $u$ over a
specific sample path $\omega$. In general, simulation-based optimization
methods include two major operations:
\begin{enumerate}
\item \textbf{Solution Assessment}: Implement $M$ evaluations for a specific
$u$ over $M$ sample paths and estimate the expected performance of solution
$u$, $E[J(u,\omega)]$, by sample average approximation, \emph{i.e.},
$\sum_{i=1}^{M}J(u,\omega_{i})\Big/M$;
\item \textbf{Search Strategy}: Use the sample average approximation in 1) to
rank solutions and search for better solutions in promising areas according to
gradient information or certain partition structures.
\end{enumerate}

Let $I$ denote the total number of solutions explored in a simulation-based
method and $C$ denote the complexity of an evaluation. Then, the total
complexity can be measured by the computational effort of implementing $M\cdot
I$ evaluations, that is, $O(M\cdot I\cdot C)$ ($M$ is not necessarily a
constant throughout the entire search process). To get a near optimal (or good
enough) solution, we need to implement more evaluations to refine solution
assessment, \emph{i.e.,} larger $M$, and explore a greater number of solutions,
\emph{i.e.,} larger $I$. Since both $M$ and $I$ can be very large in solving a
general simulation-based optimization problem using optimality in expectation,
this approach is computationally intensive or even intractable for many
applications in practice.

Some simulation-based optimization methods have been developed over the past
few decades. Computational effort can be reduced by either using a smaller
number $M$ of evaluations in assessment, such as Ordinal Optimization
\cite{HoZhaoJia08} and Optimal Computing Budget Allocation \cite{ChenLee11},
or by reducing $I$ in search, such as Nested Partitions \cite{ShiOlafsson00}
and COMPASS \cite{HongNelson06}, or by both ways, such as Perturbation
Analysis \cite{HoCao91} and Retrospective Optimization \cite{Chen94}%
\cite{Jin98}. Moreover, to further improve computational efficiency, these
methods may be applied to certain approximations of the original systems with
little loss of accuracy in the optimization solutions, such as the use of
Stochastic Flow Models \cite{CassandrasWardi02}\cite{YaoCassandras12} and
Hindsight Optimization \cite{ChongGivanChang00} \cite{WuChongGivan02}. Since
these methods still need to employ sample average approximations to assess
every explored solution (or estimate its performance gradient), their
complexity can still be approximated as $O(M\cdot I\cdot C)$ with either
smaller $M$ or smaller $I$ or both. In practice, timely decision making is
usually preferable or required in a dynamic environment. The heavy
computational burden of those methods using optimality in expectation limits
their applications in such situations.

Moreover, we argue that optimality in expectation is not truly
\textquotedblleft optimal\textquotedblright\ in certain cases since the
expected performance is not exactly the actual performance, but only a
promising guess. This kind of optimality is generally suitable for a
stationary environment, in which probability distributions remain unchanged
over time and the objective value is the average performance over the long
term. However, in practice we often face a nonstationary environment, such as
the example included in the paper, in which nonstationary demand is a common
occurrence in industries with short product life cycles, seasonal patterns,
varying customer behavior, or other factors. When we continually or
periodically make decisions, the probability distributions used are only valid
for a short term and need to be occasionally updated. Clearly, optimality in
expectation does not necessarily lead to the \textquotedblleft
best\textquotedblright\ solution in this case.

In this paper, we propose an alternative sense of optimality,
\textquotedblleft optimality in probability\textquotedblright, which favors a
solution that has a \emph{higher chance} to get a better actual performance.
The best solution using optimality in probability, termed \textquotedblleft
Champion Solution\textquotedblright, is defined as the one whose actual
performance is more likely better than that of any other solution.
\emph{Optimality in probability} is an effective alternative to optimality in
expectation, especially when facing a dynamic and nonstationary environment.
Moreover, using \emph{optimality in probability} is computationally promising
for a class of simulation-based optimization problems, since it can reduce
computational complexity by orders of magnitude compared to general
simulation-based optimization methods using \emph{optimality in expectation}.
Accordingly, we develop an \textquotedblleft Omega Median
Algorithm\textquotedblright\ to obtain the champion solution without
iteratively searching for better solutions based on sample average
approximations, a process which is computationally intensive and commonly
required when seeking optimality in expectation. Furthermore, although it is
quite challenging to solve many stochastic optimization problems, their
corresponding deterministic versions, which can be regarded as optimization
problems defined over a single sample path, have been efficiently solved by
certain off-line algorithms. The Omega Median Algorithm is able to fully
utilize the efficiency of these well-developed off-line algorithms to further
facilitate timely decision making, which is clearly preferable in a dynamic
environment with limited computing resources.

In the rest of the paper, we first introduce the champion solution and then
develop an efficient simulation-based optimization method, termed Omega Median
Approximation in Section II. We then consider a nonstationary inventory
control in Section III. Numerical results are given in Section IV to
demonstrate the performance of the champion solution. We close with
conclusions in Section V.

\section{Champion Solution}

The \textquotedblleft Champion Solution\textquotedblright\ is the best
solution using optimality in probability and defined for general stochastic
minimization problems as follows, where $\Pr[\cdot]$ is the usual notation for
\textquotedblleft probability\textquotedblright:
\begin{definition}
The champion solution is a solution $u^{c}$ such that
\begin{equation} \label{CSDef}
\Pr\left[ J(u^c,\omega)\le J(u,\omega)\right] \ge 0.5, \;\; \forall\; u \in \Phi,
\end{equation}
where $J(u,\omega)$ is the actual performance of $u$ over a certain sample
path $\omega$.
\end{definition}

\noindent\textbf{Remark}: A natural question which immediately arises is
\textquotedblleft why do we select $0.5$?\textquotedblright\ rather than some
$q>0.5$ and define the champion solution as $u^{\prime}$ below such that
\begin{equation} \label{CSDefw}
\Pr\left[ J(u',\omega)\le J(u,\omega)\right] \ge q, \;\; \forall\; u \in \Phi,
\end{equation}
which looks even better than $u^{c}$ in (\ref{CSDef}). However, a definition
using $q>0.5$ is not meaningful for the large majority of stochastic problems
with continuous random variables. Generally speaking, if the sample path
$\omega$ is constructed with continuous random variables, we can have for
$u^{\prime}\neq u^{c}$:
\begin{equation} \label{CSDefp}
\Pr\left[ J(u',\omega)< J(u^c,\omega)\right] = \Pr\left[ J(u',\omega)\le J(u^c,\omega)\right].
\end{equation}
From (\ref{CSDefw}), we have $\Pr\left[  J(u^{\prime},\omega)\leq
J(u^{c},\omega)\right]  \geq q$. Combining it with (\ref{CSDefp}), we have
$\Pr\left[  J(u^{c},\omega)\leq J(u^{\prime},\omega)\right]  \leq1-q$, which
contradicts (\ref{CSDef}) if $q>0.5$. Therefore, even if there might exist
some $u^{\prime}$ that satisfies (\ref{CSDefw}), it will be still the same as
$u^{c}$ defined in (\ref{CSDef}).

The NBA Finals can be used as an example to illustrate the champion solution.
The champion team (the champion solution) will be determined from two teams
(solutions) based on the results in 7 games (sample-paths). The champion
solution is the team (solution) that wins more games (performs better in more
sample-paths). Ideally, if there is an infinite number of games
(sample-paths), then the champion solution is the team with winning ratio of
more than $50\%$.

For cases with more than two solutions, we interpret the champion solution
through the example of presidential elections originally used for Arrow's
Impossibility Theorem in social choice theory \cite{Arrow63}. Imagine we have
three candidates (solutions) \emph{A}, \emph{B} and \emph{C}. Each voter
(sample-path) will rank the three candidates according to his or her own
preference. Now, we randomly pick three voters' preference lists
(sample-paths) as shown in the following table, where $A\succ B$ means
\emph{A} is preferred over \emph{B}. 

\begin{table}[H]
\renewcommand{\arraystretch}{0.6}
\centering
\par
\begin{tabular}{c|ccc}
  \hline  \hline
  \\
  & Voter 1 & Voter 2  & Voter 3\\
  \hline
  \\
Preference  &  $A\succ B \succ C$   &  $B\succ C \succ A$  &  $C\succ B \succ A$\\
  \hline  \hline
\end{tabular}
\end{table}
Based on the the three voters' preferences, we can estimate that
\begin{itemize}
  \item \emph{A}~:~  $\Pr[A\succ B ] = 33\%$, $\Pr[A\succ C ] = 33\%$;
  \item \emph{B}~:~ $\Pr[B\succ A ] = 67\%$, $\Pr[B\succ C ] = 67\%$;
  \item \emph{C}~:~ $\Pr[C\succ A ] = 67\%$, $\Pr[C\succ B ] = 33\%$.
\end{itemize}
Clearly, \emph{B} should be the president (the champion solution) because
\emph{B} gets a higher preference (performs better) than all the other
candidates (solutions) from the majority of voters (sample-paths).

\subsection{Optimality in Expectation vs. Optimality in Probability}

The champion solution favors the winning ratio instead of the winning scale.
That is why we call it \textquotedblleft Champion Solution\textquotedblright.
We can still use the example of NBA Finals. Imagine it was finished in 6 games
and the results are shown in the following table.
\begin{table}[H]
\renewcommand{\arraystretch}{0.6}
\centering
\par
\begin{tabular}{c|cccccc}
  \hline  \hline
  \\
  & Game 1 & Game 2 & Game 3 & Game 4 & Game 5 & Game 6 \\
  \hline
\\
A & 107 & 103 & 84 & 106 & 90 & 98 \\
\\
B & 100 & 97  & 103 & 104 & 101 & 95 \\
  \hline  \hline
\end{tabular}
\end{table}
Team A is the champion (the champion solution) because Team A won more games
than Team B. However, we can also find out that the average score of Team B,
100, is higher than 98, the one of Team A, which implies that Team B is
actually better than Team A in the sense of ``Optimality in Expectation''
commonly adopted in the literature.

Clearly, the champion solution is the best solution in a different sense of
optimality, termed ``Optimality in Probability'' here, which may be a better
optimality sense than the traditional ``Optimality in Expectation'' in some
applications, such as the NBA Finals.

Generally, the champion solution and the traditional optimal solution are not
the same, but they coincide under the following \textquotedblleft
\textbf{Non-singularity Condition}\textquotedblright\ as shown in
\cite{MaoCassJDEDS10a}:
\[
\begin{split}
&  \Pr\left[  J(u',\omega)\leq J(u'',\omega)\right]  \geq0.5 \;\\&  \Longrightarrow \; E\left[  J(u',\omega)\right]  \leq E\left[  J(u'',\omega)\right], \quad \forall u',u''\in \Phi
\end{split}
\]
The interpretation of the Non-singularity Condition is that if $u^{\prime}$ is
more likely better than $u^{\prime\prime}$ (in the sense of resulting in lower
cost), then the expected cost under $u^{\prime}$ will be lower than the one
under $u^{\prime\prime}$. This is consistent with common sense in that any
solution $A$ more likely better than $B$ should result in $A$'s expected
performance being better than $B$'s. Only \textquotedblleft
singularities\textquotedblright\ such as $J(u^{\prime},\omega)\gg
J(u^{\prime\prime},\omega)$ with an unusually low probability for some
$(u^{\prime},u^{\prime\prime})$ can affect the corresponding expectations so
that this condition may be violated. It is straightforward to verify this
Non-singularity Condition for several common cases; for example, consider
$\min_{x}E(x-Y)^{2}$, where $Y$ is a uniform random variable over $[a,b]$. The
optimal solution $(a+b)/2$ satisfies the Non-singularity Condition.

In addition, even though decision makers may prefer \textquotedblleft
optimality in expectation\textquotedblright\ in their applications, the
champion solution still has a very promising performance if the corresponding
problem is not that singular because it can beat all the other solutions with
a probability greater than $0.5$.

\subsection{Sufficient Existence Condition of Champion Solution}

A champion solution may not always exist for a general stochastic optimization
problem. If there are only two feasible solutions, as in the NBA Finals, a
champion solution can be obviously guaranteed. However, this is not the case
even for as few as three feasible solutions. Recalling the example of
presidential elections, what if Voter 3 changes his or her preference as shown
in the following table? 

\begin{table}[H]
\renewcommand{\arraystretch}{0.6}
\centering
\par
\begin{tabular}{c|ccc}
  \hline  \hline
  \\
  & Voter 1 & Voter 2  & Voter 3\\
  \hline
  \\
Preference  &  $A\succ B \succ C$   &  $B\succ C \succ A$  &  $C\succ A \succ B$\\
  \hline  \hline
\end{tabular}
\end{table}
This time we have
\begin{itemize}
  \item \emph{A}~:~  $\Pr[A\succ B ] = 67\%$, $\Pr[A\succ C ] = 33\%$;
  \item \emph{B}~:~ $\Pr[B\succ A ] = 33\%$, $\Pr[B\succ C ] = 67\%$;
  \item \emph{C}~:~ $\Pr[C\succ A ] = 67\%$, $\Pr[C\succ B ] = 33\%$.
\end{itemize}
No candidate can be elected as president (the champion solution) because no
one can be preferred over all the other candidates (solutions) from the
majority of voters (sample-paths); this is in fact the case addressed in
Arrow's paradox \cite{Arrow63}.

In the following, we will establish a sufficient existence condition, which
can be utilized later in the inventory problem considered in the next section.
To accomplish that, we first define the concepts of \textquotedblleft$\omega
$-problem\textquotedblright, \textquotedblleft$\omega$%
-solution\textquotedblright\ and \textquotedblleft$\omega$%
-median\textquotedblright\ for the class of stochastic optimization problems
in (\ref{SOP}). (As these definitions are based on or related to single
sample-path $\omega$, we name their initials as $\omega$-.)

\begin{definition} An $\omega$-problem is the deterministic optimization problem defined over a single sample-path $\omega$, \emph{i.e.},
\[
\min_{u\in \Phi} J(u,\omega).
\]
\end{definition}

\begin{definition} An $\omega$-solution is the optimal solution of  the corresponding $\omega$-problem, \emph{i.e.}, the solution $u^\omega$ such that
\[
u^\omega = \arg\min_{u\in \Phi} J(u,\omega).
\]
\end{definition}

\begin{definition}
The $\omega$-median is the median of the probability distribution of $\omega$-solution $u^\omega$, \emph{i.e.}, the solution $u^m$ such that
\begin{equation}\label{w-Med}
\Pr[u^\omega \le u^m] \ge 0.5  \quad \mbox{and} \quad \Pr[u^\omega \ge u^m] \ge 0.5
\end{equation}
\end{definition}

\noindent\textbf{Remark}: $u^{\omega}$ is a random variable related to
sample-path $\omega$. The two probabilities in (\ref{w-Med}) are the
cumulative distribution function (\emph{cdf}) and complementary cumulative
distribution function (\emph{ccdf}) of $u^{\omega}$ respectively. Both
probabilities can be strictly more than 0.5 at the same time if $u^{\omega}$
is not continuous.

\begin{theorem} \label{CSExist}
If $J(u,\omega)$ is a scalar unimodal function in $u$ for any $\omega$, then the $\omega$-median is a champion solution.
\end{theorem}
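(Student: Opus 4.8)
The plan is to exploit the unimodality of $J(\cdot,\omega)$ so that, for any fixed competitor $u\in\Phi$, the event $\{J(u^m,\omega)\le J(u,\omega)\}$ is shown to contain one of the two events whose probability is guaranteed to be at least $0.5$ by the definition of the $\omega$-median in (\ref{w-Med}). The argument splits into two symmetric cases according to whether $u$ lies to the right or to the left of $u^m$, and in each case the key fact is that on every sample-path the minimizer $u^\omega$ of the unimodal function $J(\cdot,\omega)$ separates a region of monotone decrease from a region of monotone increase.

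First I would fix $u\ge u^m$ and consider any sample-path $\omega$ for which $u^\omega\le u^m$. Since $J(\cdot,\omega)$ is unimodal with minimizer $u^\omega$, it is nondecreasing on $[u^\omega,\infty)\cap\Phi$; because $u^\omega\le u^m\le u$, this yields $J(u^m,\omega)\le J(u,\omega)$. Hence $\{u^\omega\le u^m\}\subseteq\{J(u^m,\omega)\le J(u,\omega)\}$, and the first inequality in (\ref{w-Med}) gives $\Pr[J(u^m,\omega)\le J(u,\omega)]\ge \Pr[u^\omega\le u^m]\ge 0.5$. The case $u\le u^m$ is handled in the mirror-image way: for every $\omega$ with $u^\omega\ge u^m$, unimodality makes $J(\cdot,\omega)$ nonincreasing on $(-\infty,u^\omega]\cap\Phi$, so $u\le u^m\le u^\omega$ again forces $J(u^m,\omega)\le J(u,\omega)$; thus $\{u^\omega\ge u^m\}\subseteq\{J(u^m,\omega)\le J(u,\omega)\}$, and the second inequality in (\ref{w-Med}) delivers $\Pr[J(u^m,\omega)\le J(u,\omega)]\ge \Pr[u^\omega\ge u^m]\ge 0.5$. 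Since every $u\in\Phi$ falls into at least one of the two cases, $u^m$ satisfies (\ref{CSDef}) and is therefore a champion solution.

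I do not expect a serious obstacle, as the proof is essentially a containment-of-events observation once unimodality is used correctly. The steps that need care are mostly bookkeeping: stating precisely the notion of unimodality being used (a minimizer $u^\omega$ on the scalar feasible set $\Phi$ with $J(\cdot,\omega)$ nonincreasing before it and nondecreasing after it, so that the $\omega$-solution and hence the $\omega$-median in (\ref{w-Med}) are well defined as a random variable and its median), observing that the weak monotonicities give only the weak inequality $J(u^m,\omega)\le J(u,\omega)$, which is exactly the form required by (\ref{CSDef}), and disposing of the borderline case $u=u^m$, for which the event trivially has probability one.
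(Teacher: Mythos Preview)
Your proposal is correct and follows essentially the same approach as the paper's own proof: both split on whether the competitor $u$ lies to the right or to the left of $u^m$, use unimodality to show that the event $\{u^\omega\le u^m\}$ (respectively $\{u^\omega\ge u^m\}$) forces $J(u^m,\omega)\le J(u,\omega)$, and then invoke (\ref{w-Med}). The only cosmetic difference is that the paper phrases the key step via the law of total probability (showing the relevant conditional probability equals~$1$), whereas you phrase it as a containment of events; these are equivalent.
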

\begin{proof}
Since $J(u,\omega)$ is a scalar unimodal function in $u$ for any $\omega$, we have
\begin{equation}\label{LeftSide}J(u',\omega)\le J(u'',\omega), \quad \mbox{for any} \quad u'' < u' <u^\omega;
\end{equation}
and
\begin{equation}\label{RightSide}
J(u',\omega)\le J(u'',\omega), \quad \mbox{for any} \quad u^\omega < u' < u''.
\end{equation}
Assume $u^m$ is the $\omega$-median. For any solution $u>u^m$, we have
\begin{equation}\label{eq201}
\begin{split}
\Pr[J(u^m,\omega) \le J(u,\omega)] & = \Pr[J(u^m,\omega) \le J(u,\omega)|u^\omega \le u^m]\Pr[u^\omega \le u^m]\\ & +\Pr[J(u^m,\omega) \le J(u,\omega)|u^\omega > u^m]\Pr[u^\omega > u^m]
\end{split}
\end{equation}
From (\ref{RightSide}), if $u > u^m$ and $u^m \ge u^\omega$, then $J(u^m,\omega) \le J(u,\omega)$, which implies that
\begin{equation}\label{eq200}
\Pr[J(u^m,\omega) \le J(u,\omega)|u^\omega \le u^m] = 1
\end{equation}
Since $u^m$ is the $\omega$-median, we have $\Pr[u^\omega \le u^m] \ge 0.5$. Combining it with (\ref{eq201}) and (\ref{eq200}), we have
\[
\begin{split}
\Pr[J(u^m,\omega) \le J(u,\omega)]  &\ge 0.5+\Pr[J(u^m,\omega) \le J(u,\omega)|u^\omega > u^m]\Pr[u^\omega > u^m] \\&\ge 0.5
\end{split}
\]

The case of $u<u^m$ can be similarly proved. Therefore, $u^m$ satisfies the definition of champion solution
\[
\Pr[J(u^m,\omega) \le J(u,\omega)] \ge 0.5, \quad \mbox{for any} \;  u \in \Phi.
\]
which implies $u^m$ is a champion solution.
\end{proof}

\subsection{Omega Median Algorithm}

Theorem \ref{CSExist} provides a sufficient existence condition for a champion
solution for a class of simulation-based optimization problems. If it is
satisfied, then a champion solution is guaranteed and can be efficiently
obtained by computing the $\omega$-median. We can efficiently obtain an
estimate of the $\omega$-median using the Omega Median Algorithm (OMA) in
Table \ref{OMA} even though the closed form of the \emph{cdf} and \emph{ccdf}
of $u^{\omega}$ cannot be derived in the class of stochastic optimization
problems in (\ref{SOP}).

\begin{table}[H]
\caption{Omega Median Algorithm}
\label{OMA}
\renewcommand{\arraystretch}{0.8}
\centering
\par%
\begin{tabular}
[l]{p{34pt}p{197pt}}\hline\hline
\textbf{Step 1}: & Randomly generate $M$ sample-paths $\omega^1,...,\omega^M$;\\
& \\
\textbf{Step 2}: & Obtain the $\omega$-solutions, $u^{\omega_i}$, by solving the $\omega$-problems $\min_{u\in \Phi} J(u,\omega_i)$ for $i=1,...,M$;\\
& \\
\textbf{Step 3}: & Find the median solution $\hat u^m$ from $u^{\omega_1},...,u^{\omega_M}$ .\\
\hline\hline
\end{tabular}
\end{table}

The median solution $\hat{u}^{m}$ derived in Step 3 of OMA is an unbiased
estimator of the $\omega$-median. Let $\mathbf{1}(\cdot)$ denote an indicator
function and
\[
\begin{split}
&G_M(u) \equiv \frac{1}{M}\sum\nolimits_{j=1}^M \mathbf{1} (u^{\omega_j} \le u);\\& \bar G_M(u) \equiv \frac{1}{M}\sum\nolimits_{j=1}^M \mathbf{1} (u^{\omega_j} \ge u).
\end{split}
\]
Then, $G_{M}(u)$ and $\bar{G}_{M}(u)$ are the estimates of the \emph{cdf} and
\emph{ccdf} of $u^{\omega}$ respectively. It can be easily verified that the
median solution $\hat{u}^{m}$ is the solution that satisfies
\[
G_M(\hat u^m)\ge 0.5 \quad \mbox{and}\quad \bar G_M(\hat u^m)\ge 0.5~.
\]
For any given $u$, based on the strong law of large numbers, $G_{M}(u)$ and
$\bar{G}_{M}(u)$ converge to $\Pr[u^{\omega}\leq u]$ and $\Pr[u^{\omega}\geq
u]$ respectively \emph{w.p.}1 (with probability 1) as $M\rightarrow+\infty$.
Thus, $\hat{u}^{m}$ also converges to the $\omega$-median $u^{m}$ \emph{w.p.}1
as $M\rightarrow+\infty$.

Furthermore, $\hat u^{m}$ can approach the $\omega$-median $u^{m}$
exponentially fast as $M$ increases as shown in Theorems \ref{DiscreteConv}
and \ref{ContinuousConv} below, which enables us to estimate the $\omega
$-median with a smaller number $M$ of sample paths.

\begin{theorem} \label{DiscreteConv} If $\Pr(u^\omega = u^m)>0$, then there always exists some constant $C$ such that
\[
\Pr[\hat u^m = u^m] \ge 1 - 2e^{-CM}
\]
\end{theorem}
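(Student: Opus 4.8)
The plan is to turn $\{\hat u^m = u^m\}$ into a statement about two binomial tail events and then bound each by a Chernoff/Hoeffding inequality. Write $p \equiv \Pr[u^\omega = u^m] > 0$, $\alpha \equiv \Pr[u^\omega < u^m]$ and $\beta \equiv \Pr[u^\omega > u^m]$, so that $\alpha + p + \beta = 1$. The $\omega$-median property (\ref{w-Med}) reads $\alpha + p \ge 0.5$ and $\beta + p \ge 0.5$, which already gives $\alpha \le 0.5$ and $\beta \le 0.5$. I will in fact use the strict inequalities $\alpha < 0.5$ and $\beta < 0.5$: since $p>0$ and $u^m$ is the unique $\omega$-median, the distribution function of $u^\omega$ jumps across the level $0.5$ exactly at $u^m$, so neither one-sided mass can equal $0.5$ (if, e.g., $\alpha=0.5$ in the discrete case, the largest atom below $u^m$ would also satisfy (\ref{w-Med}), contradicting uniqueness). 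Put $\delta \equiv \min\{0.5-\alpha,\ 0.5-\beta\} > 0$; this gap is what drives the exponential rate.

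Next I would identify a clean sufficient event for $\hat u^m = u^m$. Let $N_< \equiv \#\{j : u^{\omega_j} < u^m\}$ and $N_> \equiv \#\{j : u^{\omega_j} > u^m\}$; these are sums of $M$ i.i.d.\ Bernoulli indicators with means $\alpha$ and $\beta$. On the event $\{N_< < M/2\}\cap\{N_> < M/2\}$ the point $u^m$ is the \emph{unique} empirical median: indeed $G_M(u^m) = 1 - N_>/M > 1/2$ and $\bar G_M(u^m) = 1 - N_</M > 1/2$, while every $v < u^m$ has $G_M(v) \le N_</M < 1/2$ and every $v > u^m$ has $\bar G_M(v) \le N_>/M < 1/2$, so no point other than $u^m$ satisfies both defining inequalities $G_M(\cdot) \ge 0.5$ and $\bar G_M(\cdot) \ge 0.5$ of Step 3 of OMA. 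Hence $\{\hat u^m \ne u^m\}\subseteq \{N_< \ge M/2\}\cup\{N_> \ge M/2\}$.

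Then, since $\alpha < 1/2$, Hoeffding's inequality (equivalently the Chernoff bound for a binomial upper tail) gives
\[
\Pr\!\left[N_< \ge \tfrac M2\right] \;=\; \Pr\!\left[\tfrac1M N_< - \alpha \ge \tfrac12 - \alpha\right] \;\le\; e^{-2M(1/2-\alpha)^2}\;\le\; e^{-2M\delta^2},
\]
and symmetrically $\Pr[N_> \ge M/2] \le e^{-2M(1/2-\beta)^2} \le e^{-2M\delta^2}$. A union bound then yields $\Pr[\hat u^m \ne u^m] \le 2e^{-2M\delta^2}$, i.e.\ $\Pr[\hat u^m = u^m] \ge 1 - 2e^{-CM}$ with $C \equiv 2\delta^2 > 0$, which is the claim.

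The two tail estimates and the set inclusion are routine; the step that needs care — and the only place the hypothesis $\Pr(u^\omega = u^m)>0$ is genuinely used — is the strictness $\alpha < 1/2$, $\beta < 1/2$. If one of them equalled $1/2$, the corresponding count would be centered exactly at the threshold $M/2$, its tail probability would stay bounded away from $0$, and $\hat u^m$ would straddle $u^m$ with non-vanishing probability, so no exponential (indeed no positive) rate could hold; the positive atom at $u^m$ is precisely what creates the gap $\delta$. (By contrast, in the continuous case $\Pr(u^\omega=u^m)=0$ and one can only expect $\hat u^m \to u^m$, not $\hat u^m = u^m$, which is why Theorem \ref{ContinuousConv} is stated separately.)
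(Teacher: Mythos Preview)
Your proof is correct and follows essentially the same route as the paper: reduce $\{\hat u^m \ne u^m\}$ to the union of two binomial upper-tail events for the strict counts $N_<,N_>$ (the paper's $L_M,\bar L_M$) and then apply a Chernoff--Hoeffding bound together with a union bound. The only differences are cosmetic: the paper uses the sharper KL-divergence form of the bound, obtaining $C=\min\{D(0.5\Vert p_1),D(0.5\Vert p_2)\}$, whereas you use the additive Hoeffding form with $C=2\delta^2$; and you are more explicit than the paper in arguing why $\alpha,\beta<1/2$ requires uniqueness of the $\omega$-median.
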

\begin{proof}
Without loss of generality, assume $\Pr(u^\omega = u^m)=c>0$, $\Pr(u^\omega < u^m)=p_1$ and $\Pr(u^\omega > u^m)=p_2$. From the definition of $\omega$-median, we have $p_1 + c \ge 0.5$ and $p_2 +c \ge 0.5$. Combining it with $p_1 + c + p_2 = 1$ and $c>0$, we have
\[
p_1<0.5, \quad p_2<0.5.
\]
The event $[\hat u^m = u^m]$ is equivalent to the event $[G_M(u^m)\ge 0.5 \mbox{ and } \bar G_M(u^m)\ge 0.5]$, which can be further equivalently reduced to $[L_M(\hat u^m)< 0.5 \mbox{ and } \bar L_M(\hat u^m)< 0.5]$, where
\[
L_M(u) = \frac{1}{M}\sum_{j=1}^M \mathbf{1} (u^{\omega_j} < u), \quad \bar L_M(u) = \frac{1}{M}\sum_{j=1}^M \mathbf{1} (u^{\omega_j} > u).
\]
Therefore, we have
\begin{equation} \label{eqevents}
\begin{split}
\Pr[\hat u^m = u^m]& = \Pr[L_M(\hat u^m)< 0.5 \mbox{ and } \bar L_M(\hat u^m)< 0.5]\\ &= 1 - Pr[L_M(u^m)> 0.5 \mbox{\ or\ } \bar L_M(u^m)> 0.5] \\
& = 1 - \big(\Pr[L_M(u^m)> 0.5] + Pr[\bar L_M(u^m) > 0.5]\big)
\end{split}
\end{equation}
Clearly, $\mathbf{1} (u^{\omega_j} < u^m), j= 1,...,M$ are \emph{i.i.d.} 0-1 random variables and $E[\mathbf{1} (u^{\omega_j} < u^m) ] = p_1$. Then based on Chernoff-Hoeffding Theorem \cite{Hoeffding63}, we have for any $\epsilon>0$
\[
\Pr[L_M(u^m)\ge p_1 + \epsilon] \le e^{-D(p_1+\epsilon||p_1)M}
\]
where $D(x||y)=x\log\frac{x}{y}+(1-x)\log\frac{1-x}{1-y}$. Similarly, we can also have
\[
\Pr[\bar L_M(u^m)\ge p_2 + \epsilon] \le e^{-D(p_2+\epsilon||p_2)M}
\]
Combining the two inequalities above with $p_1<0.5$ and $p_2<0.5$, we can further have
\[
\Pr[L_M(u^m)> 0.5]  \le \Pr[L_M(u^m) \ge 0.5] \le e^{-D(0.5||p_1)M}
\]
\[
\Pr[\bar L_M(u^m)> 0.5]  \le \Pr[\bar L_M(u^m) \ge 0.5] \le e^{-D(0.5||p_2)M}
\]
Combining them with (\ref{eqevents}), we can finally have
\[
\begin{split}
\Pr[\hat u^m = u^m]&\ge 1 - e^{-D(0.5||p_1)M} - e^{-D(0.5||p_2)M}\ge 1 - 2e^{-CM}
\end{split}
\]
where $C = \min\big(D(0.5||p_1),D(0.5||p_2)\big)$
\end{proof}

\begin{theorem} \label{ContinuousConv} If $\Pr(u^\omega = u^m)=0$, then for any $\epsilon>0$, there always exists $C>0$ such that
\[
\begin{split}
&\Pr\big[\;|G_M(u^m) - 0.5| < \epsilon\big] \ge 1 - 2e^{-CM}, \\
&\Pr\big[\;|\bar G_M(u^m) - 0.5| < \epsilon\big]  \ge 1 - 2 e^{-CM} .
\end{split}
\]
\end{theorem}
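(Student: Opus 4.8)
The plan is to reduce the statement to a direct application of the Chernoff-Hoeffding bound, exactly as in the proof of Theorem \ref{DiscreteConv}, once we observe that the hypothesis $\Pr(u^\omega = u^m)=0$ forces the underlying Bernoulli parameter to be \emph{exactly} $0.5$ rather than merely bounded away from it.

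First I would pin down the relevant probabilities. From the definition of the $\omega$-median we only have the two inequalities $\Pr[u^\omega \le u^m]\ge 0.5$ and $\Pr[u^\omega \ge u^m]\ge 0.5$. Adding them and using
\[
\Pr[u^\omega\le u^m]+\Pr[u^\omega\ge u^m] = 1 + \Pr[u^\omega = u^m] = 1
\]
forces both probabilities to equal $0.5$. This is precisely the step where the assumption $\Pr(u^\omega=u^m)=0$ is used, and it is what distinguishes this case from the discrete case of Theorem \ref{DiscreteConv}, where the analogous parameters $p_1,p_2$ were strictly below $0.5$.

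Next, note that $G_M(u^m) = \frac{1}{M}\sum_{j=1}^M \mathbf{1}(u^{\omega_j}\le u^m)$ is the empirical average of the i.i.d.\ $0$--$1$ random variables $\mathbf{1}(u^{\omega_j}\le u^m)$, each with mean $\Pr[u^\omega \le u^m] = 0.5$. Applying the Chernoff-Hoeffding Theorem to both tails (as in the proof of Theorem \ref{DiscreteConv}, with $p_1$ replaced by $0.5$) gives, for any $\epsilon\in(0,0.5)$,
\[
\Pr[G_M(u^m)\ge 0.5+\epsilon] \le e^{-D(0.5+\epsilon||0.5)M},\qquad \Pr[G_M(u^m)\le 0.5-\epsilon] \le e^{-D(0.5-\epsilon||0.5)M},
\]
where $D(x||y)=x\log\frac{x}{y}+(1-x)\log\frac{1-x}{1-y}>0$ whenever $x\neq y$. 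Taking the union bound and setting $C \equiv \min\big(D(0.5+\epsilon||0.5),\, D(0.5-\epsilon||0.5)\big)>0$ yields $\Pr[|G_M(u^m)-0.5|\ge \epsilon]\le 2e^{-CM}$, i.e.\ $\Pr[|G_M(u^m)-0.5| < \epsilon]\ge 1 - 2e^{-CM}$. The statement for $\bar G_M(u^m)$ is proved identically using $\Pr[u^\omega\ge u^m]=0.5$ and the i.i.d.\ variables $\mathbf{1}(u^{\omega_j}\ge u^m)$; one may take the same $C$ or, if a single constant is desired for both inequalities, the minimum of the two. The case $\epsilon\ge 0.5$ is trivial since the left-hand probability is then $1$ (or one simply replaces $\epsilon$ by $\min(\epsilon,1/4)$).

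I do not expect a genuine obstacle here: the only point requiring care is the first step, turning the two median inequalities into the exact equality $\Pr[u^\omega\le u^m]=0.5$, which is exactly what makes \emph{both} tails of the concentration bound decay. A slightly cruder alternative that avoids the relative-entropy notation is to invoke Hoeffding's inequality directly: since the summands lie in $[0,1]$, $\Pr[|G_M(u^m)-0.5|\ge\epsilon]\le 2e^{-2\epsilon^2 M}$, which gives the claim with $C=2\epsilon^2$.
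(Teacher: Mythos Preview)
Your proposal is correct and follows essentially the same route as the paper: first use $\Pr(u^\omega=u^m)=0$ together with the $\omega$-median definition to conclude $\Pr[u^\omega\le u^m]=\Pr[u^\omega\ge u^m]=0.5$, then apply the Chernoff--Hoeffding bound to the i.i.d.\ indicators, and take $C=\min\big(D(0.5+\epsilon\|0.5),D(0.5-\epsilon\|0.5)\big)$. Your additional remarks on the case $\epsilon\ge 0.5$ and the cruder Hoeffding constant $C=2\epsilon^2$ are harmless extras beyond what the paper records.
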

\begin{proof} From $\Pr(u^\omega = u^m)=0$ and the definition of $u^m$, we have
\[
\Pr[u^\omega \le u^m] = 1- \Pr[u^\omega \ge u^m] = 0.5
\]
which implies that
\[
E\big[G_M(u^m)\big] = 0.5
\]
Since $\mathbf{1} (u^{\omega_j} \le u^m), j= 1,...,M$ are \emph{i.i.d.} 0-1 random variables and $E[\mathbf{1} (u^{\omega_j} < u^m) ] = 0.5$, based on Chernoff-Hoeffding Theorem \cite{Hoeffding63}, we have for any $\epsilon>0$
\[
\begin{split}
&\Pr[G_M(u^m) \ge 0.5 + \epsilon] \le e^{-D(0.5+\epsilon||0.5)M} \mbox{ \ and \ } \\ & \Pr[G_M(u^m) \le 0.5 - \epsilon] \le e^{-D(0.5-\epsilon||0.5)M}
\end{split}
\]
where $D(x||y)=x\log\frac{x}{y}+(1-x)\log\frac{1-x}{1-y}$. Therefore, we have
\[
\begin{split}\Pr\big[\;|G_M(u^m) - 0.5| < \epsilon\big]& = 1 - \Pr[G_M(u^m) \ge 0.5 + \epsilon] - \Pr[G_M(u^m) \le 0.5 - \epsilon]\\
&\ge 1 - e^{-D(0.5+\epsilon||0.5)M}  - e^{-D(0.5-\epsilon||0.5)M} \\
& \ge 1 - 2 e^{-CM} .
\end{split}
\]
where $C = \min\big(D(0.5+\epsilon||0.5),D(0.5-\epsilon||0.5)\big)$.

It can be similarly proved that
\[
\begin{split}
\Pr\big[\;|\bar G_M(u^m) - 0.5| < \epsilon\big]&  \ge 1 - 2 e^{-CM} .
\end{split}
\]
\end{proof}

Theorem \ref{DiscreteConv} corresponds to the case that $u$ is discrete and
Theorem \ref{ContinuousConv} is mainly for the case that $u$ is continuous.
Theorem \ref{DiscreteConv} has a stronger sense of convergence than Theorem
\ref{ContinuousConv}, which implies that $\hat u^{m}$ converges faster in
discrete cases than in continuous ones.

\section{An Example: Inventory Control with Nonstationary Demand} \label{OICP}

To illustrate and interpret the use of the Omega Median Algorithm, we consider
an on-line periodic review inventory control problem with nonstationary demand
as depicted in Figure \ref{Process} as a discrete event system (DES), in which
fixed setup cost and full backlogging are adopted. The following notation will
be used in the rest of the paper:
\begin{itemize}
  \item $x_i=$ Inventory level in period $i$;
  \item $d_i=$ Demand in period $i$;
  \item $u_i=$ Order quantity in period $i$;
  \item $h=$ Holding cost rate for inventory;
  \item $p=$ Penalty cost rate for backlog;
  \item $K=$ Fixed setup cost per order;
  \item $\delta(u_i)=\left\{ {\begin{array}{*{20}{c}}
~1~~~&{{u_i} > 0}\\
~0~~~&{{u_i} = 0}
\end{array}} \right. .$
\end{itemize}
The one-period demand $d_{i}$ is nonstationary, \emph{i.e.}, its corresponding
probability distribution is arbitrary and allowed to vary and correlate over
periods $i$.

\begin{figure}[H]
\centerline{\includegraphics[width=0.7\textwidth]{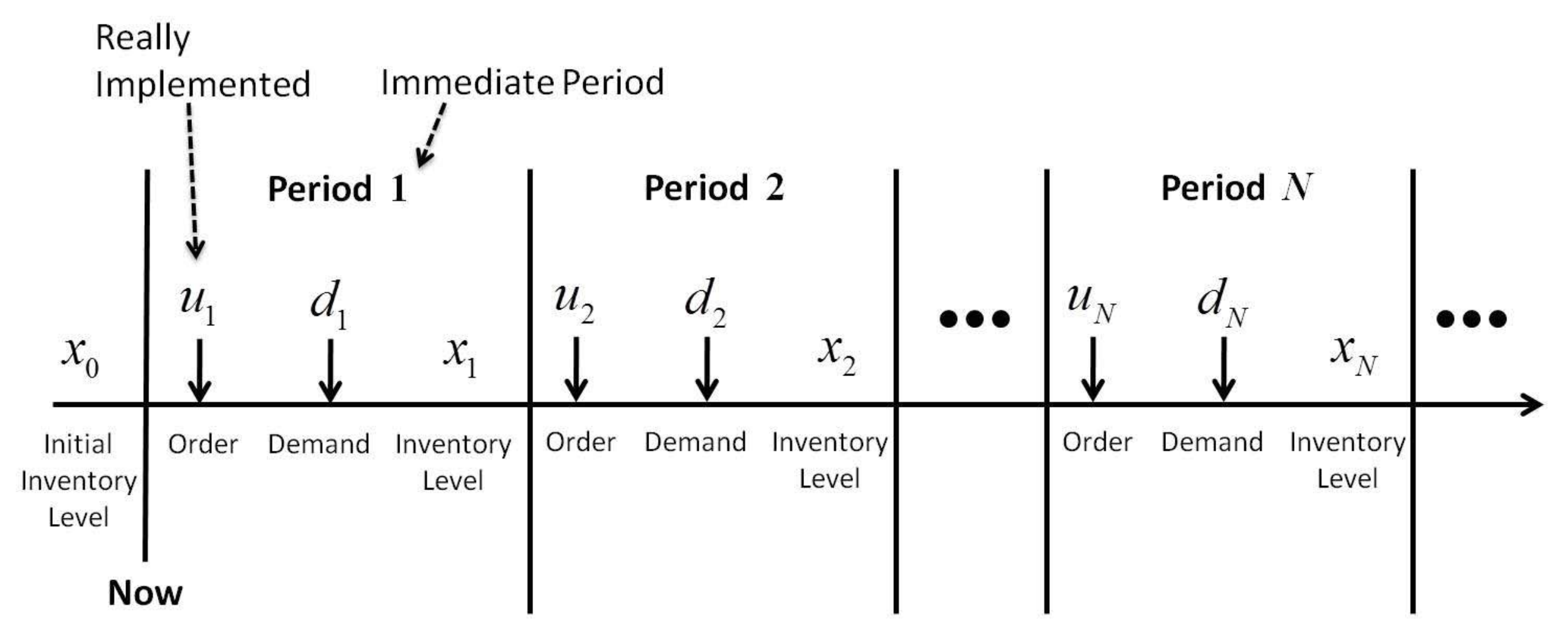}}\caption{On-line Inventory Control Process}
\label{Process}
\end{figure}

An ordering event may be triggered at the beginning of a period, namely, an
order of $u_{i}$ items may be placed in period $i$. A fixed setup cost $K$
will be triggered if $u_{i}>0$. The inventory level $x_{i}$ is counted after
the one-period demand $d_{i}$, \emph{i.e.}, $x_{i}=x_{i-1}+u_{i}-d_{i}$, which
results in the maintenance cost of period $i$ (either holding or shortage
cost) defined below,
\begin{equation}\label{Hfunction}
H(x_i) = h\cdot\max(x_i,0) + p\cdot\max(-x_i,0).
\end{equation}
The average operating cost in each period, including both maintenance cost and
setup cost, determines the system performance.

The static $(s,S)$ policy is an optimal policy for the cases with stationary
demands using optimality in expectation. Once the two thresholds $(s,S)$ are
optimally determined, the corresponding optimal ordering quantity can be
simply derived as $u_{i}=S-x_{i-1}$ if $x_{i-1}\le s$ and $u_{i}=0$ otherwise.
However, the static $(s,S)$ policy is not optimal for nonstationary demands
\cite{Axsater06}: the optimal order decisions cannot be simply derived by
optimizing the two thresholds $(s,S)$, as in the algorithm in
\cite{ZhengFed91} that requires integer-valued and \emph{i.i.d.} (independent
and identical distributed) one-period demands. Some efforts have been made
towards the nonstationary inventory control problem with fixed setup cost
\cite{Askin81,Bollapragada99}. A heuristic similar to Silver-Meal heuristics
is proposed in \cite{Askin81} and requires to explicitly compute the
probability distributions of cumulative demands, which is not plausible for
general nonstationary demands with complicated patterns. In
\cite{Bollapragada99}, nonstationary demands are approximated by averaging
demands over periods and then a stationary policy is computed by utilizing the
algorithm in \cite{ZhengFed91}, which will be benchmarked against the proposed
Omega Median Algorithm in the numerical results section below.

Although general simulation-based methods can still be utilized to determine
the best order decision using optimality in expectation, it is computationally
intensive or even intractable as analyzed in Section \ref{Complexity}.
Instead, we pursue the best solution in the sense of optimality in
probability, namely, the \textquotedblleft Champion Solution\textquotedblright%
, which is a very good alternative when facing a nonstationary environment.

In the on-line inventory control process depicted in Fig \ref{Process}, we
make an order decision at the beginning of each period. The rolling horizon
method can be applied, in which we look ahead $N$ periods and the actual
performance over a specific $N$-period sample path $\omega=\{d_{1}%
,d_{2},...,d_{N}\}$ can be defined as the total cost:
\begin{equation}\label{JN}
\begin{split}
J_N(&u_1,u_2,...,u_N,\omega) = \sum\nolimits_{i=1}^N \big(H(x_i) + K\cdot \delta(u_i)  \big)  \\
& \quad {s.t.} \;\; x_i = x_{i-1} - d_i + u_i, \; i=1,...,N.
\end{split}
\end{equation}
where $H(x_{i})+K\cdot\delta(u_{i})$ is the operating cost in period $i$,
including maintenance cost and setup cost.

Since only the immediate-period order decision, $u_{1}$, is required each
time, we will focus on $u_{1}$ and optimally determine $u_{2},...,u_{N}$ based
on the choice of $u_{1}$. Then, the actual performance over a specific
$N$-period sample path $\omega$ becomes solely associated with $u_{1}$ as
follows:
\begin{equation}\label{JN1}
\begin{split}
J_N(&u_1,\omega) = \big(H(x_1) + K\cdot \delta(u_1)\big)\\&  + \min_{u_2,...,u_N} \sum\nolimits_{i=2}^N \big(H(x_i) + K\cdot \delta(u_i)  \big)  \\
& {s.t.} \;\; x_i = x_{i-1} - d_i + u_i, \; i=1,...,N.
\end{split}
\end{equation}
In the ideal case of looking ahead for an infinite horizon, the actual
performance over a specific sample path $\omega$ can be formulated as the
infinite-horizon average cost:
\begin{equation}\label{JInf}
\begin{split}
J(&u_1,\omega)\equiv \lim_{N\rightarrow +\infty} \frac{1}{N} \big\{J_N(u_1,\omega)\big\}
\end{split}
\end{equation}
We aim at the champion solution using the actual performance function in
(\ref{JInf}).

\subsection{Existence of Champion Solution}

The inventory control problem can be solved by sequentially answering the two
questions below.
\begin{table}[H]
\renewcommand{\arraystretch}{1}
\centering
\par
\begin{tabular}
[l]{p{50pt}p{190pt}}
\textbf{Question 1}: & Whether to order (Yes or No);\\
\\
\textbf{Question 2}: & How many items to order if ``Yes'' to Question 1. \\
\end{tabular}
\end{table}

Since Question 1 has only two options, its champion solution can be guaranteed
and easily obtained as follows,
\[\left\{ {\begin{array}{*{20}{c}}
   {\mbox{Yes~}} & {\mbox{\ if\ } \Pr[u_1^\omega > 0]\ge 50\%}  \\
   {\mbox{No~}} & {\mbox{otherwise.}}  \\
\end{array}} \right.\]
where $u_{1}^{\omega}$ is the $\omega$-solution of minimizing $J(u_{1}%
,\omega)$ in (\ref{JInf}) and $\Pr[u_{1}^{\omega}>0]$ is the probability to
place a positive order.

Question 2 is conditioned on \textquotedblleft Yes\textquotedblright\ to
Question 1, which implies that $u_{1}>0$ in Question 2. In the following, we
will verify the existence of a champion solution for $u_{1}>0$ with the help
of the lemma below.

\begin{lemma} \label{lem3} $J_N(u_1,\omega)$ in (\ref{JN1}) is $K$-convex in $u_1$ for $u_1>0$.
\end{lemma}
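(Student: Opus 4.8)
The plan is to recognize that, on $u_1>0$, $J_N(\,\cdot\,,\omega)$ is a constant plus a translated copy of the optimal cost-to-go of a finite-horizon $(s,S)$ inventory problem along the fixed sample path $\omega$, and then to invoke the classical fact that such cost-to-go functions are $K$-convex. First I would eliminate the setup term for $u_1$: on $u_1>0$ we have $\delta(u_1)=1$, so $K\delta(u_1)$ equals the constant $K$, and with $x_1=x_0-d_1+u_1$ a translate of $u_1$ we get
\[
J_N(u_1,\omega)=K+H(x_1)+V_2(x_1),\qquad
V_2(x_1)\equiv\min_{u_2,\dots,u_N\ge 0}\ \sum_{i=2}^N\bigl(H(x_i)+K\delta(u_i)\bigr),
\]
with $x_i=x_{i-1}-d_i+u_i$, $i=2,\dots,N$; here $V_2(x_1)$ is the optimal cost of running the system over periods $2,\dots,N$ from entering inventory $x_1$. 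Since $K$-convexity is preserved by adding a constant and by translating the argument, and since the sum of a convex function (namely $H$) and a $K$-convex function is again $K$-convex, it suffices to prove that $V_2$ is $K$-convex.

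For that I would set up the deterministic dynamic program along $\omega$: put $W_{N+1}\equiv 0$ and, for $n=N,N-1,\dots,2$,
\[
W_n(y)=\min_{u\ge 0}\bigl[\,K\delta(u)+H(y-d_n+u)+W_{n+1}(y-d_n+u)\,\bigr],
\]
so that $V_2=W_2$, and prove by backward induction that every $W_n$ is $K$-convex, carrying along continuity and coercivity ($W_n(y)\to+\infty$ as $|y|\to\infty$), which hold because $H$ grows linearly in $|x|$ and all costs are nonnegative. The base case $W_{N+1}\equiv 0$ is convex, hence $K$-convex. For the inductive step, set $G_n\equiv H+W_{n+1}$; as a sum of the convex $H$ and the $K$-convex $W_{n+1}$ it is $K$-convex, continuous and coercive, and $W_n$ is exactly the ``order-up-to'' operator $G\mapsto\min_{u\ge 0}[\,K\delta(u)+G(\,\cdot\,+u)\,]$ applied to $G_n$ and then translated by $-d_n$.

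The core step, which I expect to be the main obstacle, is the classical Scarf lemma: if $G$ is $K$-convex, continuous and coercive, then
\[
\widehat W(y)\equiv\min_{u\ge 0}\bigl[\,K\delta(u)+G(y+u)\,\bigr]=\min\bigl\{\,G(y),\ K+\min_{w\ge y}G(w)\,\bigr\}
\]
is again $K$-convex. Here I would take $S$ to be a global minimizer of $G$ and $s\le S$ the smallest point with $G(s)=K+G(S)$, observe (using the $K$-convexity of $G$) that $\widehat W(y)=K+G(S)$ for $y<s$ while $\widehat W(y)=G(y)$ for $y\ge s$, and then verify the defining inequality
\[
K+\widehat W(a+y)\ \ge\ \widehat W(y)+\tfrac{a}{b}\bigl(\widehat W(y)-\widehat W(y-b)\bigr),\qquad a,b>0,
\]
by a case analysis on where $y-b$, $y$ and $a+y$ lie relative to $s$; the only delicate cases are the ones in which these three points straddle $s$, and there one combines the $K$-convexity of $G$ with the identity $G(s)=K+G(S)$.

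Granting this lemma, the backward induction closes: $V_2=W_2$ is $K$-convex, hence so is $H+V_2$, hence so is $J_N(\,\cdot\,,\omega)=K+(H+V_2)(x_0-d_1+\,\cdot\,)$ on $u_1>0$, which is the assertion of the lemma. Every step apart from the Scarf lemma is routine bookkeeping with the closure properties of $K$-convexity; the lemma itself is the substance of the argument.
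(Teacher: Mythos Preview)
Your proposal is correct and follows essentially the same decomposition as the paper: write $J_N(u_1,\omega)=K+H(x_1)+L_N(x_1,\omega)$ on $u_1>0$ (your $V_2$ is the paper's $L_N$), note that $H$ is convex, and conclude once the cost-to-go $L_N$ is shown to be $K$-convex. The only difference is that the paper dispatches the $K$-convexity of $L_N$ in one line by citing Section~4.2 of Bertsekas, whereas you spell out the backward induction and the Scarf preservation lemma explicitly; your version is more self-contained but not a different route.
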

\begin{proof}
It can be easy to prove that $L_N(x_1,\omega)$ is $K$-convex in $x_1$ using a similar way as shown in Section 4.2 in \cite{Bertsekas00}. Combining it with $x_1 = u_1+x_0-d_1$, $L_N(u_1+x_0-d_1,\omega)$ is also $K$-convex in $u_1$.

From the definition of $H(x)$ in (\ref{Hfunction}), $H(x_1)$ is convex in $x_1$, which implies $H(u_1+x_0-d_1)$ is also convex in $u_1$.

Recalling the definition of $J_N(u_1,\omega)$ in (\ref{JN1}). From $u_1>0$, we have
\[
J_N(u_1,\omega) = H(u_1+x_0-d_1) + K + L_N(u_1+x_0-d_1,\omega)
\]
Combining it with the fact that $H(u_1+x_0-d_1)$ is convex in $u_1$ and $L_N(u_1+x_0-d_1,\omega)$ is $K$-convex in $u_1$, we have  $J_N(u_1,\omega)$ is $K$-convex in $u_1$ for $u_1>0$.
\end{proof}

Based on Lemma \ref{lem3} and the definition of $J(u_{1},\omega)$ in
(\ref{JInf}), we prove the following theorem.

\begin{theorem}\label{JUniModal} $J(u_1,\omega)$ is convex in $u_1$ for $u_1>0$.
\end{theorem}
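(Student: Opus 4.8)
The plan is to transfer the $K$-convexity of $J_N(u_1,\omega)$ established in Lemma \ref{lem3} across the limit in (\ref{JInf}), exploiting the fact that the $1/N$ normalization annihilates the fixed setup constant $K$. I would start from the defining inequality of $K$-convexity: by Lemma \ref{lem3}, for every sample path $\omega$, every $b>0$, every $z\ge 0$, and every $y$ with $y-b>0$,
\[
K + J_N(z+y,\omega)\ \ge\ J_N(y,\omega) + \frac{z}{b}\bigl(J_N(y,\omega) - J_N(y-b,\omega)\bigr).
\]
This is legitimate because the three arguments $y-b$, $y$, and $z+y$ all lie in $(0,\infty)$, the region in which Lemma \ref{lem3} applies.

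Next I would divide this inequality by $N$ and let $N\to\infty$. Since $K$ is a fixed constant, $K/N\to 0$; and by the definition (\ref{JInf}), $\frac{1}{N}J_N(u_1,\omega)\to J(u_1,\omega)$ for each fixed $u_1>0$ and each $\omega$. A non-strict inequality is preserved under passage to the limit, so we obtain
\[
J(z+y,\omega)\ \ge\ J(y,\omega) + \frac{z}{b}\bigl(J(y,\omega) - J(y-b,\omega)\bigr)
\]
for all $b>0$, $z\ge 0$, $y-b>0$; equivalently, writing $x_1=y-b<x_2=y\le x_3=z+y$ and clearing denominators, $\frac{J(x_3,\omega)-J(x_2,\omega)}{x_3-x_2}\ge \frac{J(x_2,\omega)-J(x_1,\omega)}{x_2-x_1}$, which is the increasing-chord-slope characterization of convexity of $J(\cdot,\omega)$ on $(0,\infty)$. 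One may also argue from the convex-combination form of $K$-convexity: $J_N(\bar u,\omega)\le \alpha J_N(u',\omega)+(1-\alpha)J_N(u'',\omega)+\beta K$ for $0<u'<u''$, $\bar u=\alpha u'+(1-\alpha)u''$, and some $\beta\in[0,1]$; dividing by $N$ and letting $N\to\infty$ sends $\beta K/N\to 0$ and yields $J(\bar u,\omega)\le \alpha J(u',\omega)+(1-\alpha)J(u'',\omega)$ at once.

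The only delicate point is the interchange of limit and inequality, and it causes no difficulty here: the inequality is non-strict, the pointwise limits in (\ref{JInf}) are assumed to exist, and every argument that appears remains strictly positive under the operations used, so Lemma \ref{lem3} stays in force throughout. I therefore anticipate no real obstacle; the substantive content lies entirely in Lemma \ref{lem3}, and Theorem \ref{JUniModal} is essentially the observation that averaging over an unbounded horizon dilutes the one-time cost $K$ to zero, downgrading $K$-convexity to ordinary convexity. In particular $J(u_1,\omega)$ is then unimodal in $u_1>0$, which is exactly what is needed to invoke Theorem \ref{CSExist} for Question 2.
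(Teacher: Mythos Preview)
Your argument is correct and is essentially the same as the paper's: both start from the $K$-convexity inequality of Lemma~\ref{lem3}, divide by $N$, and pass to the limit so that $K/N\to 0$ collapses $K$-convexity to ordinary convexity. The only differences are notational (you use $y,b,z$ where the paper uses $u_1<u_1'<u_1''$) and your added remarks on the chord-slope reformulation and the limit interchange, which are sound but not needed beyond what the paper states.
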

\begin{proof} From Lemma \ref{lem3}, $J_N(u_1,\omega)$ is $K$-convex
in $u_1$ for $u_1>0$, that is, it satisfies that for any $0< u_1<u_1'<u_1''$
\[
\begin{split}
K + J_N(u_1'',\omega) &\ge J_N(u_1',\omega) +
(\frac{u_1''-u_1'}{u_1'-u_1})(J_N(u_1',\omega)-J_N(u_1,\omega)).
\end{split}
\]
Then we apply limit operator at both sides and can have
\[
\begin{split}
\lim_{N\rightarrow +\infty}& \frac{K + J_N(u_1'',\omega) }{N}\ge  \lim_{N\rightarrow +\infty} \frac{J_N(u_1',\omega)}{N} +
(\frac{u_1''-u_1'}{u_1'-u_1})\lim_{N\rightarrow +\infty}\frac{(J_N(u_1',\omega)-J_N(u_1,\omega))}{N}
\end{split}
\]
which implies that for any $0< u_1<u_1'<u_1''$,
\[
\begin{split}
J(u_1'',\omega) \ge J(u_1',\omega) +
(\frac{u_1''-u_1'}{u_1'-u_1})(J(u_1',\omega)-J(u_1,\omega)).
\end{split}
\]
The inequality above is equivalent to the definition of convex function, that is, $J(u_1,\omega)$ is convex in $u_1$ for $u_1>0$.
\end{proof}

Theorem \ref{JUniModal} implies that $J(u_{1},\omega)$ is unimodal for
$u_{1}>0$, which satisfies the sufficient existence condition identified in
Theorem \ref{CSExist}. Therefore, a champion solution can be guaranteed to
address Question 2 and can be obtained using OMA.

\subsection{Implementation of OMA}

Although $d_{i}$, $i=1,2,\ldots$, is nonstationary, we can still estimate
their probability distributions based on the most recently updated
information. Sample paths can then be randomly generated in Step 1 of OMA
using these estimates.

Step 2 of OMA determines the major portion of its computational complexity,
which can be largely reduced if we manage to find an efficient algorithm to
solve the corresponding $\omega$-problems. In the context of this inventory
control problem, the $\omega$-problem is to find the $\omega$-solution
$u_{1}^{\omega}$ of minimizing $J(u_{1},\omega)$ in (\ref{JInf}). This
$\omega$-solution $u_{1}^{\omega}$ can be well approximated by minimizing
$J_{N}(u_{1},\omega)$ in (\ref{JN1}) with a large enough $N$. Furthermore, it
can be easily verified that, if $u_{1}^{\ast},...u_{N}^{\ast}$ can minimize
$J_{N}(u_{1},...,u_{N},\omega)$ in (\ref{JN}), then $u_{1}^{\ast}$ can also
minimize $J_{N}(u_{1},\omega)$ in (\ref{JN1}). Therefore, we can finally
obtain the $\omega$-solution $u_{1}^{\omega}$ by minimizing $J_{N}%
(u_{1},...,u_{N},\omega)$ in (\ref{JN}) with a sufficiently large $N$.

The problem of minimizing $J_{N}(u_{1},...,u_{N},\omega)$ in (\ref{JN}) is
closely related to the following problem, which is a dynamic lot-sizing
problem with backlogging as defined in the literature \cite{FedergruenTzur93}.
\begin{equation}\label{F-SSPP}
\begin{split}
&  \min_{u_1,...,u_N} \sum\nolimits_{i=1}^N\big\{ H(x_i) + K\cdot \delta(u_i)\big\}\\
& {s.t.} \;\; x_i = x_{i-1} - d_i + u_i, \; i=1,...,N;\\
&\quad \;\; \sum\nolimits_{i=1}^N u_i + x_0 = \sum\nolimits_{i=1}^N d_i.
\end{split}
\end{equation}
The only difference between the two problems results from the second
constraint, which can be interpreted as the condition of \textquotedblleft
zero inventory at last\textquotedblright. Since profits earned from sales are
not included in the objective, it would never be optimal to place a new order
at the last period which would mostly end up with a negative inventory level.
The terminal effect of \textquotedblleft ordering nothing at
last\textquotedblright\ and \textquotedblleft ending with negative
inventory\textquotedblright\ are quite undesirable. Solving the problem in
(\ref{F-SSPP}) instead with the extra second constraint can be very helpful in
approximating the $\omega$-solution when using a relatively small $N$. Since
the problem in (\ref{F-SSPP}) has been well studied in \cite{FedergruenTzur93},
we can efficiently solve each $\omega$-problem with complexity $O(N\log N)$
for general cases.

The remaining Step 3 of OMA can be trivially fulfilled once we have $M$
$\omega$-solutions.

\subsection{Complexity Analysis}\label{Complexity}

Clearly, the complexities of Step 1 and 3 of OMA are $O(MN)$ and $O(M)$
respectively. With the help of the algorithm in \cite{FedergruenTzur93}, the
complexity of Step 2 is $O(M\cdot N\log N)$. Thus, we can finally efficiently
obtain a champion solution of the nonstationary inventory control problem in
complexity $O(M\cdot N\log N)$ by applying OMA.

If we try a general simulation-based optimization method using optimality in
expectation, then we need to solve the following stochastic optimization
problem (\ref{JN-EXP}) at each decision point,
\begin{equation}\label{JN-EXP}
\begin{split}
&\min_{u_1} \;\bar J_N(u_1) =E\bigg\{ \big(H(x_1) + K\cdot \delta(u_1)\big) \\ & + \min_{\mu_2,...,\mu_N} E \Big\{\sum\nolimits_{i=2}^N \big(H(x_i) + K\cdot \delta(u_i)  \big)  \Big\}\bigg\} \\
& \quad {s.t.} \;\; x_i = x_{i-1} - d_i + u_i, \; i=1,...,N;\\
& \quad \quad\;\;\; u_i = \mu_i(x_{i-1}) , \; i=2,...,N.
\end{split}
\end{equation}
where $\mu_{i}(\cdot)$ is the feedback control policy to determine $u_{i}$
based on the state $x_{i-1}$. Clearly, even for a given $u_{1}$, computing
$\bar{J}_{N}(u_{1})$ is a notoriously hard dynamic programming problem.
Although a heuristic termed \textquotedblleft Hindsight
Optimization\textquotedblright\ \cite{ChongGivanChang00} can be employed to
approximate the second term in the objective of (\ref{JN-EXP}) as the expected
hindsight-optimal value below,
\[
\begin{split}
E \left\{\min_{u_2,...,u_N} \sum\nolimits_{i=2}^N \Big(H(x_i) + K\cdot \delta(u_i)  \Big)  \right\},
\end{split}
\]
still requires a complexity of $O(M\cdot N\log N)$ to assess a specific choice
of $u_{1}$. Moreover, it needs to go through a search process to get a near
optimal $u_{1}$. If there are a total og $I$ solutions explored in the
process, then the total computational complexity is $O(M\cdot I\cdot N\log
N)$, which is an order of magnitude higher than that of OMA.

\section{Numerical Results}

We illustrate the performance of OMA through a numerical example. The
following parameters are identical to those used in \cite{Zheng91},
\begin{itemize}
  \item Fixed Setup Cost $K=64$;
  \item Holding Cost Rate $h=1$;
  \item Penalty Cost Rate $p=9$.
\end{itemize}
A case of nonstationary demands is considered, in which demand in each period
is Poisson distributed and may has a different mean value $\mu_{i}$. The mean
value $\mu_{i}$ will be randomly picked from a set of numbers between 10 an 75
in increments of 5, that is, $\{10,15,20,...,70,75\}$.

\subsection{$\omega$-median Approximation}

An example of estimating the $\omega$-median is shown in Figure
\ref{wMedEstimation}, in which $M=200$ sample-paths are generated. The
$\omega$-solutions are obtained by solving $200$ corresponding $\omega
$-problems through the algorithm in \cite{FedergruenTzur93}.

The solid line in Figure \ref{wMedEstimation} is the \emph{cdf} function of
the $\omega$-solution constructed based on these sample-paths. The estimate of
the $\omega$-median is $u^{m}=78$, which is indicated through the dashed line.

\begin{figure}[H]
\centerline{\includegraphics[width=0.5\textwidth]{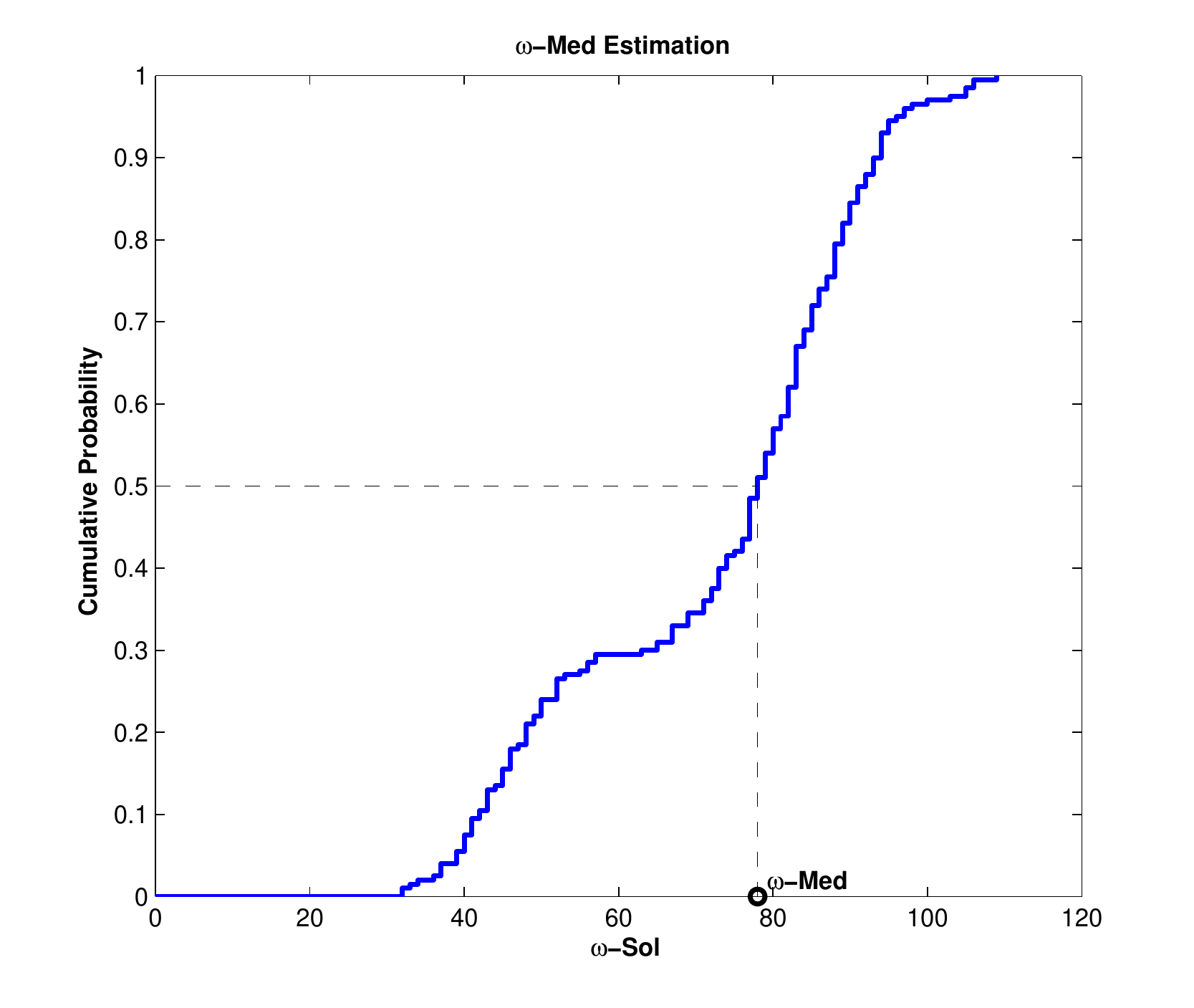}}\caption{$\omega$-median Approximation}
\label{wMedEstimation}
\end{figure}

\subsection{Convergence of $\omega$-median in $M$}

The convergence of the $\omega$-median in the number of sample-paths $M$ is
shown in Figure \ref{wMedConvergence}, in which $M$ varies from 10 to 1000 in
increments of 10. It can be seen that the estimate of the $\omega$-median
quickly converges within $100$ replications, which supports the result in
Theorem \ref{DiscreteConv}.

\begin{figure}[H]
\centerline{\includegraphics[width=0.5\textwidth]{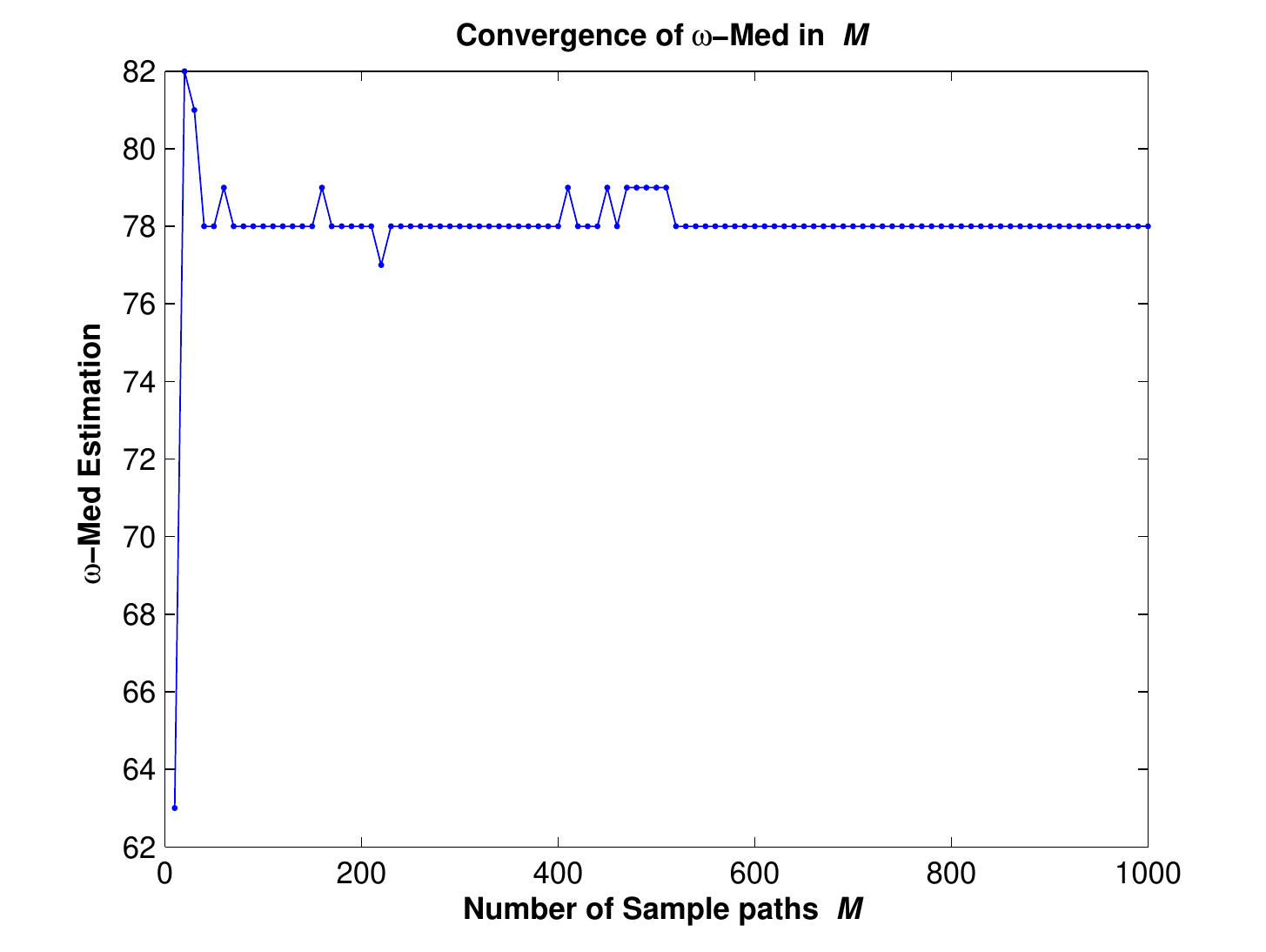}}
\caption{Convergence of $\omega$-median in $M$}
\label{wMedConvergence}
\end{figure}

\subsection{Stationary Cases: ``Optimality in Expectation'' vs. ``Optimality in Probability''}

The optimal static policies $(s^{\ast},S^{\ast})$ have been exactly derived by
using the algorithm in \cite{ZhengFed91} for stationary cases with different
$\mu$. This provides an opportunity to benchmark the performance of the
champion solution against the optimal policy $(s^{\ast},S^{\ast})$, the best
solution in the sense of \textquotedblleft optimality in
expectation\textquotedblright.

We set $\mu=20$ in the following experiment, in which 20 instances with $N=50$
periods are randomly generated, and we compare the two methods below:

\begin{enumerate}
  \item Method \textbf{SS}: Order decisions are directly obtained according to the optimal static policy  $(s^*=14,S^*=62)$ as obtained in \cite{ZhengFed91};
  \item Method  \textbf{CS}: Order decisions are obtained by using the $\omega$-median approximation with $M=100$ sample paths at the beginning of each period, namely, the estimates of champion solutions.
\end{enumerate}

The performance comparison results are listed in Table \ref{tab5}, in which
the first column is the instance index, the second column is the cost $C_{ss}$
of using method \textbf{SS}, the third column is the cost $C_{cs}$ of using
method \textbf{CS}, the fourth column is the difference between the two costs
and the fifth column is the fractional improvement defined as $\frac
{(C_{cs}-C_{ss})}{C_{ss}}$.

\begin{table}
[H]
\caption{Stationary Demands}\label{tab5}
\renewcommand{\arraystretch}{0.6}
\centering
\par
\begin{tabular}{>{\centering\arraybackslash}m{25pt}| >{\centering\arraybackslash}m{50pt} | >{\centering\arraybackslash}m{50pt} | >{\centering\arraybackslash}m{50pt} | >{\centering\arraybackslash}m{50pt} } 
\hline\hline
 &  Cost of \textbf{SS}  & Cost of \textbf{CS} & Difference & Improvement\\
  & $C_{ss}$ & $C_{cs}$ & $C_{ss}-C_{cs}$ & $\frac{(C_{cs}-C_{ss})}{C_{ss}}$\\
 \hline
&	&	& & \\
1&	2401&	2439	&-38	&-1.58\%\\
2& 2710 &	2590	&120&	4.43\%\\
3&	2525 &	2561&	-36&	-1.43\%\\
4& 2612	&2574&	38	&1.45\%\\
5& 2450	&2700	&-250	&-10.20\%\\
6&	2390	&2724	&-334	&-13.97\%\\
7&	2401	&2552	&-151	&-6.29\%\\
8&	2711	&2516	&195	&7.19\%\\
9&	2410	&2670&	-260&	-10.79\%\\
10&	2598	&2454&	144&	5.54\%\\
11	& 2563	&2559	&4	&0.16\%\\
12	& 2441	&2570	&-129&	-5.28\%\\
13	& 2530	&2469&	61	&2.41\%\\
14	& 2419	&2446	&-27	&-1.12\%\\
15	& 2571	&2488&	83	&3.23\%\\
16	& 2365	&2599	&-234&	-9.89\%\\
17	& 2622	&2542	&80	&3.05\%\\
18&	2672	&2502	&170&	6.36\%\\
19&	2480	&2372	&108&	4.35\%\\
20	& 2543	&2608	&-65	&-2.56\%\\
\hline
&	&	& & \\
Mean&	2520.7	&2546.75	&-26.05	&-1.03\%\\
 \hline  \hline
\end{tabular}
\end{table}

From Table \ref{tab5}, the average operating cost of \textbf{SS} is slightly less than the one of \textbf{CS}, which confirms that the order decisions based on the optimal policy $(s^*,S^*)$ are truly the best in the sense of optimality in expectation.

We can also observe that the order decisions based on the estimated champion solutions perform better than the ones based on the optimal policy $(s^*,S^*)$ in 10 instances, \emph{i.e.}, instances 2, 4, 8, 10, 11, 13, 15, 17, 18 and 19. \textbf{CS} has a winning ratio of $50\%$ against \textbf{SS} based on these 20 instances, which implies that the estimated champion solutions perform as well as the exact optimal policy in the sense of optimality in probability in this numerical experiment. Besides, the estimated champion solutions are not the exact champion solutions and we can further improve the performance by increasing the sample size $M$.

Even though decision makers may prefer the sense of optimality in expectation, the estimated champion solutions are near-optimal, since their corresponding average cost is only $1.03\%$ worse than the one of the optimal policy in expectation.

\subsection{Nonstationary Cases}

In the following experiments of nonstationary cases, we set different $\mu_i$ for each period, which are randomly selected from the values listed in $\{10, 15, 20, ..., 70, 75\}$.

\begin{table}[H]
\caption{Nonstationary Demands}\label{tab6}
\renewcommand{\arraystretch}{0.6}
\centering
\par
\begin{tabular}{>{\centering\arraybackslash}m{25pt}| >{\centering\arraybackslash}m{50pt} | >{\centering\arraybackslash}m{50pt} | >{\centering\arraybackslash}m{50pt} | >{\centering\arraybackslash}m{50pt} } 
\hline\hline
 & Cost of SS & Cost of CS & Difference & Improvement\\
  & $C_{ss}$ & $C_{cs}$ & $C_{ss}-C_{cs}$ & $\frac{(C_{ss}-C_{cs})}{C_{ss}}$\\
 \hline
 &	&	& & \\
1&	3506&	2908	&598	&17.06\%\\
2&	3642	&2938	&704	&19.33\%\\
3&	3467	&3073	&394	&11.36\%\\
4&	3611	&3022	&589	&16.31\%\\
5&	3540	&3004	&536	&15.14\%\\
6&	3519	&3092	&427	&12.13\%\\
7&	3516	&3033	&483	&13.74\%\\
8&	3782	&3096	&686	&18.14\%\\
9&	3440	&2989	&451	&13.11\%\\
10&	3567	&2907	&660	&18.50\%\\
11	&3846	&2992	&854	&22.20\%\\
12	&3251	&2918	&333	&10.24\%\\
13	&3388	&2750	&638	&18.83\%\\
14	&2990	&2807	&183	&6.12\%\\
15	&3434	&2868	&566	&16.48\%\\
16	&3633	&3167	&466	&12.83\%\\
17	&3643	&2984	&659	&18.09\%\\
18	&3535	&3038	&497	&14.06\%\\
19	&3456	&3192	&264	&7.64\%\\
20	&3251	&3071	&180	&5.54\%\\
\hline
&	&	& & \\
Mean&	3500.85&	2992.45&	508.4&	14.52\%\\
 \hline  \hline
\end{tabular}
\end{table}

We again generate 20 instances with $N = 50$ periods and compare two methods below:

\begin{enumerate}
  \item Method \textbf{SS}: Order decisions are directly obtained according to a heuristic nonstationary policy $(s_i,S_i)$ for each period $i$.  A common heuristic method is to determine $(s_i,S_i)$ according to $\mu_i$ in the corresponding period $i$ as if demands are stationary with the mean value of $\mu_i$. For example, if $\mu_1 = 15, \mu_2 = 30, \mu_3 = 20, ...$, then we can look up the table obtained in \cite{ZhengFed91} to find their corresponding optimal values, choose $(s_1 = 10,S_1=49)$, $(s_2 = 23, S_2 =66)$, $(s_3=14,S_3=62), ...,$ to apply in period $1, 2, 3, ...,$ respectively. Clearly, this heuristic $(s_i,S_i)$ policy is not optimal for the nonstationary case.
  \item Method  \textbf{CS}: Order decisions are still obtained by using the $\omega$-median approximation with $M=100$ sample paths at the beginning of each period, namely, the estimates of champion solutions.
\end{enumerate}

The performance comparison results are listed in Table \ref{tab6} that shares a similar organization with Table \ref{tab5}. It can be easily seen that the estimated champion solutions result in a $14.52\%$ lower average cost and perform better than the heuristic $(s_i,S_i)$ policy in all 20 instances.

\section{Conclusion}

An alternate optimality sense, optimality in probability, is proposed in this paper. The best solution using optimality in probability is termed a ``Champion Solution'' whose actual performance is more likely better than that of any other solution. A sufficient existence condition for the champion solution is proved for a class of simulation-based optimization problems. A highly efficient method, the Omega Median Algorithm (OMA), is developed to compute the champion solution without iteratively exploring better solutions based on sample average approximations. OMA can reduce the computational complexity by orders of magnitude compared to general simulation-based optimization methods using optimality in expectation.

The champion solution becomes particularly meaningful when facing a nonstationary environment. As shown in the example of inventory control with nonstationary demand, the solution using optimality in expectation is not necessarily optimal and is computationally intractable in a dynamic environment. The champion solution is a good alternative and computationally promising. Its corresponding solution algorithm, OMA, can fully utilize the efficiency of those well-developed off-line algorithms to further facilitate timely decision making, which is  preferable in a dynamic environment with limited computing resources. Moreover, even for some stationary scenarios as shown in the numerical results, the ``Champion Solution'' can still achieve a performance comparable to the one using optimality in expectation.

Future work is aiming at generalizing the sufficient existence condition and extending the idea of champion solution to a wider class of stochastic optimization problems.

\bibliographystyle{plain}
\bibliography{JF_DST}

\end{document}